\documentclass[reqno]{amsart}
\usepackage{fixltx2e}
\usepackage{amssymb,amsmath,amsfonts,amsthm}
\usepackage[utf8]{inputenc}
\usepackage[T1]{fontenc}
\usepackage[all]{xy}
\usepackage{mathbbol}
\usepackage{mathabx}
\usepackage{graphicx}
\usepackage{xcolor}

\setcounter{tocdepth}{1}
\usepackage{tikz-cd} 

\newtheorem{thm}{Theorem}[section]
\newtheorem{cor}[thm]{Corollary}

\newtheorem{prop}[thm]{Proposition}
\theoremstyle{definition}
\newtheorem{defn}[thm]{Definition}

\newtheorem{ex}[thm]{Example}

\tolerance=5000 \topmargin 1cm \oddsidemargin=1cm
\evensidemargin=1cm \textwidth 14cm \textheight 20cm

\begin{document}
	
\title[Bipartite graphs and finite-dimensional semisimple Leibniz algebras]{Bipartite graphs and the structure of finite-dimensional semisimple Leibniz algebras}
\author{R. Turdibaev}
\address{[Rustam Turdibaev] Inha University in Tashkent, Ziyolilar 9, 100170 Tashkent, Uzbekistan}
\email{r.turdibaev@inha.uz}

\begin{abstract}  Given a finite connected bipartite graph, finite-dimensional indecomposable semisimple Leibniz algebras are constructed. Furthermore, any finite-dimensional indecomposable semisimple Leibniz algebra admits a similar construction.

\end{abstract}
\subjclass[2010]{17A32}
\keywords{semisimple Leibniz algebra, connected bipartite graph}

\maketitle

	\section{Introduction}
\indent 

The finite-dimensional simple Lie algebras over an algebraically closed field of characteristic zero are classified and it is a classical result that a finite-dimensional semisimple Lie algebra is a direct sum of simple Lie subalgebras (see \cite{Jacobson}). A finite-dimensional module over a semisimple Lie algebra due to Weyl's theorem is completely reducible into a direct sum of simple submodules. Furthermore, a Lie algebra admits a Levi decomposition -- a semi-direct sum of a semisimple subalgebra and a maximal solvable ideal. 
	
In this paper a ``non-commutative'' generalization of Lie algebras, introduced by Bloh (\cite{Bloh}) and later by Loday (\cite{LodayCyclic},\cite{LodayPaper})- the so-called Leibniz algebras are studied. Although the classical simplicity for Leibniz algebras implies that it is a Lie algebra, a modified definition of the simplicity was introduced in \cite{Dzumadil'daev} and has been in use in the various papers on the structure theory of Leibniz algebras. Generalization of semisimplicity for Leibniz algebras draws a parallel with semisimple Lie algebras, which is the main focus of the current work. However, it is well-known that a semisimple Leibniz algebra is not in general a direct sum of simple Leibniz algebras, and the question on the structure of semisimple Leibniz algebras has been open. Recently, in \cite[Theorem 3.5]{AKOZh} the authors establish the description of  the finite-dimensional indecomposable semisimple non-Lie Leibniz algebra using a graph, whose vertices are the simple Lie subalgebras of the liezation of the Leibniz algebra. 

Motivated by the results of \cite{AKOZh}, the goal of this work is to shed light on how finite-dimensional semisimple Leibniz algebras are built. Turns out, the structure of a  finite-dimensional semisimple Leibniz algebra is more clear if instead of a graph (cf \cite{AKOZh}), one uses a bipartite graph. The main results are presented in the last section, consisting of the description of semisimple Leibniz algebras from \cite{AKOZh} with different proofs and a new construction of finite-dimensional semisimple Leibniz algebras using the bipartition of an associated graph.

\section{Preliminaries}
In the following section necessary definitions and results on Lie algebra and representation theory is given. Connection of Leibniz algebra with Lie algebra and its modules is provided. One of the main ingredients in this work, an analogue of Levi's theorem for Leibniz algebra is obtained directly from the results of T. Pirashvili \cite{Pirashvili} in subsection \ref{section_levi}. 
\subsection{Lie algebras and Leibniz algebras}
\begin{defn}
	An algebra $\mathfrak{g}$ over $\mathbb{K}$ is called a Lie  algebra over $\mathbb{K}$ if its multiplication ($[-,-]$) satisfies the identities:
	
	(1) $[x,x]=0$
	
	(2) $[x,[y,z]]+[y,[z,x]]+[z,[x,y]]=0$ for all $x,y,z\in \mathfrak{g}$ (\textit{Jacobi} identity).
\end{defn}
	
\begin{defn}
	A (left) $\mathfrak{g}$-module is vector space $  M $ together with a map $\mathfrak{g}\otimes   M  \to   M,\ x\otimes   m  \mapsto x.  m $   such that for all $x,y \in \mathfrak{g}$ and all $m\in M $ we have  
	\begin{center}
		$[x,y].m=x.(y.m)-y.(x.m).$
	\end{center}
\end{defn}	
	Given a  left   $\mathfrak{g}$-module action on $M$, one can construct a  right action by $m.x=: -x.m  $ and it satisfies the identity 
	\begin{center}
		$m.[x,y] =(m.x).y-(m.y).x.$
	\end{center}
 
	Left and right actions induce Lie algebra structure on $M\oplus \mathfrak{g}$, where $M$ becomes an abelian ideal and $\mathfrak{g}$ is a subalgebra. 
If one has the right action of $\mathfrak{g}$ on $M$ and sets the left action to be zero, this induces a new type of a product that generalizes the Lie bracket on $\mathfrak{g}$ given in the following 
\begin{ex}\label{onlyexample} (\cite{Kinyon})
	Let $\mathfrak{g}$ be a Lie algebra and $M$ be a $\mathfrak{g}$-module. Consider $L=\mathfrak{g}\oplus M$ with a bracket $[(g_1,m_1), (g_2,m_2)]:=([g_1,g_2], -g_2.m_1).$ Then  \begin{equation}\label{Leibnizidentity} [[x,y],z]=[[x,z],y]+[x,[y,z]]\end{equation}  
	holds for any $x,y,z\in L$ and this algebra is not a Lie algebra if the action of $\mathfrak{g}$ on $M$ is not trivial.
\end{ex}
A similar construction is given in \cite{Dzumadil'daev}. 
\begin{defn}
	A vector space $L$ with a bracket that satisfies identity (\ref{Leibnizidentity}) is called a \textit{Leibniz} algebra.
\end{defn}

The Leibniz algebra of Example \ref{onlyexample} is denoted by $\mathfrak{g}\ltimes M$.

For a Leibniz algebra $L$ set $I=\textrm{Span} \langle [x,x] \mid x\in L\rangle .$ Then $I$ is a proper ideal of  $L$ and $[L,I]=\{0\}$. If $I=\{0\}$ then $L$ is a Lie algebra (and the converse is also true). For a non-Lie Leibniz algebra the ideal $I$ is always non-trivial and the following notion is well-defined. 

\begin{defn}\label{liezation}
	For a Leibniz algebra $L$ the quotient algebra $\mathfrak{g}_L:=L/I$ is a Lie algebra which is called the \textit{liezation} of $L$.
\end{defn}
 There is the  following short exact sequence and the epimorphism $f$ is universal in the sense that a Leibniz algebra homorphism from $L$ to any Lie algebra $\mathfrak{g}$ factors through $f$: 
 \begin{center}
	\begin{tikzcd}
		I \arrow[r,hook] &L \arrow{d}\arrow[r, two heads]{r}{f}
		&\mathfrak{g}_L \arrow[dashed]{ld}\\
		& \mathfrak{g}
	\end{tikzcd}  
\end{center}
Note that, due to $I$ annihilating the Leibniz algebra whenever multiplied form the right, $I$  admits a structure of a  right Lie algebra $\mathfrak{g}_L$-module with the well-defined action $i.g=[i,s(g)]$, where $s:\mathfrak{g}_L \to L$ is a linear section. 	  

\begin{defn}
	An algebra $A$ is called \textit{decomposable} if $A=A_1\oplus A_2$ for some proper ideals. An algebra without this property is called   \textit{indecomposable}.
\end{defn}

\noindent Any finite-dimensional algebra is either indecomposable or is a finite direct sum of indecomposable algebras. A Lie algebra with no non-trivial ideals is called  \textit{simple}. Simple Lie algebras are indecomposable, while the converse is not necessarily true. 

A Leibniz algebra with only one non-trivial ideal $I$, so-called   \textit{simple}   Leibniz algebra, is indecomposable.

\subsection{Levi's Theorem for Leibniz algebras}\label{section_levi}

Similarly, as in Lie algebra theory, the following notions transfer to the case of Leibniz algebra. 

\begin{defn}
 An ideal $S$ of a Leibniz algebra $L$ is called  \textit{solvable}  if $S^{[k]}=\{0\}$ for some integer $k$, where $S^{[0]}=S, \ S^{[m+1]}=[S^{[m]},S^{[m]}]$. 
The maximum solvable ideal is called solvable \textit{radical} and is denoted by $\textrm{Rad}(\mathfrak{g})$.
\end{defn}

 A Lie algebra $\mathfrak{g}$ is called  \textit{semisimple}   if $\textrm{Rad}(\mathfrak{g})=\{0\}$. There is a well-known Levi-Malcev decomposition of a finite-dimensional Lie algebra $\mathfrak{g}$ as a semidirect sum of a semisimple subalgebra $\mathfrak{s}$ and the solvable radical   {$\mathfrak{g}=  \mathfrak{s}  \ltimes \textrm{Rad}(\mathfrak{g})$}. For Leibniz algebras similar result was proved by D.~Barnes \cite{Barnes} in 2011. Note that, the same result is implicit from \cite[Proposition 2.4]{Pirashvili} given below. 
\begin{prop}\label{section}
	 Let  $\phi \colon L \to \mathfrak{g}$ be an epimorphism from an arbitrary finite-dimensional Leibniz algebra $L$ to a   semisimple   Lie algebra $\mathfrak{g}$. Then $\phi$ admits a   section.
\end{prop}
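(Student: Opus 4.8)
The plan is to bootstrap from the classical Levi--Malcev theorem for Lie algebras by passing through the liezation. Since $\mathfrak{g}$ is a Lie algebra, the universal property of $f\colon L\to\mathfrak{g}_L$ forces $\phi(I)=\{0\}$ (equivalently $\phi([x,x])=[\phi(x),\phi(x)]=0$), so $\phi$ factors as $\phi=\bar\phi\circ f$ with $\bar\phi\colon\mathfrak{g}_L\to\mathfrak{g}$ a surjective homomorphism of finite-dimensional Lie algebras onto a semisimple one. Such a map splits by classical theory (its kernel meets a Levi factor in an ideal complemented by a copy of $\mathfrak{g}$), so I obtain a Lie subalgebra $\mathfrak{s}\subseteq\mathfrak{g}_L$ with $\bar\phi|_{\mathfrak{s}}\colon\mathfrak{s}\xrightarrow{\ \sim\ }\mathfrak{g}$. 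It then suffices to lift $\mathfrak{s}$ to a Leibniz subalgebra $\widetilde{\mathfrak{s}}\subseteq L$ on which $f$ restricts to an isomorphism onto $\mathfrak{s}$: the composite $\phi|_{\widetilde{\mathfrak{s}}}=\bar\phi\circ f|_{\widetilde{\mathfrak{s}}}$ is then an isomorphism $\widetilde{\mathfrak{s}}\xrightarrow{\ \sim\ }\mathfrak{g}$, and its inverse followed by the inclusion $\widetilde{\mathfrak{s}}\hookrightarrow L$ is the desired section.

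For the lifting I would set $L':=f^{-1}(\mathfrak{s})$, a Leibniz subalgebra of $L$ containing $I$ and fitting into a short exact sequence $0\to I\to L'\to\mathfrak{s}\to 0$. Here $I$ is an abelian ideal, since $[I,I]\subseteq[L,I]=\{0\}$; it is annihilated on the right by all of $L'$, and the residual left multiplication $[I,L']$ turns $I$ into a right $\mathfrak{s}$-module with zero left action, so that the \emph{split} extension of this type is exactly $\mathfrak{s}\ltimes I$ as in Example~\ref{onlyexample}. Picking a linear section $\tau\colon\mathfrak{s}\to L'$ of $f|_{L'}$, the obstruction to $\tau$ being a homomorphism is the bilinear map $\alpha(x,y):=[\tau x,\tau y]-\tau[x,y]\in I$, which is a $2$-cocycle in the Leibniz cochain complex of $\mathfrak{s}$ with coefficients in $I$; replacing $\tau$ by $\tau-\beta$ for $\beta\colon\mathfrak{s}\to I$ alters $\alpha$ by a coboundary. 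Thus a complement $\widetilde{\mathfrak{s}}$ exists, and the proof is complete, as soon as $HL^{2}(\mathfrak{s},I)=\{0\}$.

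The heart of the matter --- and the step I expect to be the main obstacle --- is precisely this vanishing for $\mathfrak{s}$ semisimple and $I$ finite-dimensional. The strategy is to reduce it to classical Lie cohomology: decompose $I=I^{\mathfrak{s}}\oplus I'$ as $\mathfrak{s}$-modules, with $I^{\mathfrak{s}}$ the invariants and $I'$ a Weyl-complement having no trivial constituent, handle the two summands separately, and identify the contributing pieces of the Leibniz complex with the Lie cohomology groups $H^{1}_{\mathrm{Lie}}(\mathfrak{s},-)$ and $H^{2}_{\mathrm{Lie}}(\mathfrak{s},-)$, all of which vanish by Whitehead's first and second lemmas. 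This is the content encapsulated in \cite[Proposition~2.4]{Pirashvili}, and the same argument applied to $L'$ (whose radical is $I$) returns Barnes' Levi theorem; one could alternatively invoke \cite[Proposition~2.4]{Pirashvili} directly at this point rather than re-deriving the cohomology computation.
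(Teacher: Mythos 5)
You have supplied an actual argument where the paper offers none: Proposition~\ref{section} is simply quoted from \cite[Proposition~2.4]{Pirashvili} (with Barnes' Levi theorem \cite{Barnes} mentioned as an independent source), so the comparison is between your sketch and a citation. Your reduction is correct and cleanly organized: $\phi(I)=0$, so $\phi$ factors through the liezation; the induced surjection $\bar\phi\colon\mathfrak{g}_L\to\mathfrak{g}$ splits by classical Lie theory on a subalgebra $\mathfrak{s}$; and a homomorphic lift of $\mathfrak{s}$ into $L'=f^{-1}(\mathfrak{s})$ yields the section, the obstruction being a class in $HL^2(\mathfrak{s},I)$ for the bimodule with zero left action (your phrase ``annihilated on the right by all of $L'$'' should read: brackets with $I$ in the right-hand slot vanish, $[L',I]=0$; your subsequent use is the correct one). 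The genuine gap is exactly where you predicted: the vanishing $HL^{2}(\mathfrak{s},I)=\{0\}$ is asserted, not proved, and the proposed reduction to Whitehead's lemmas does not go through as stated. For this coefficient bimodule the alternating cochains are not a subcomplex of the Leibniz complex (already $d\beta(x,y)=\beta(x).y-\beta([x,y])$ fails to be alternating), and an alternating Leibniz $2$-cocycle does not satisfy the Chevalley--Eilenberg identity --- the two cocycle conditions differ by module-action terms such as $\alpha(y,z).x$, which never occurs in the Leibniz identity. So there is no off-the-shelf identification of ``the contributing pieces'' with $H^{1}_{\mathrm{Lie}}$ and $H^{2}_{\mathrm{Lie}}$; making such a comparison precise is exactly the nontrivial content of Pirashvili's machinery (and the trivial-isotypic summand $I^{\mathfrak{s}}$ needs the Ntolo--Pirashvili vanishing of Leibniz cohomology with trivial coefficients, which is a theorem in its own right, not an instance of Whitehead). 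Finally, your fallback of ``invoking \cite[Proposition~2.4]{Pirashvili} directly at this point'' is circular here, since that citation \emph{is} the statement you were asked to prove.

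The gap can be closed non-circularly without changing your architecture: apply Barnes' theorem \cite{Barnes} to $L'$. Since $L'/I\cong\mathfrak{s}$ is semisimple and $I$ is abelian, $\mathrm{Rad}(L')=I$, so Barnes provides a subalgebra $\widetilde{\mathfrak{s}}$ with $L'=\widetilde{\mathfrak{s}}\ltimes I$ and $f|_{\widetilde{\mathfrak{s}}}\colon\widetilde{\mathfrak{s}}\to\mathfrak{s}$ an isomorphism, which is precisely the lift you need (equivalently, apply Barnes to $L$ itself and split $\phi$ on the resulting Levi subalgebra, which is a semisimple Lie algebra). Alternatively, if you want a self-contained cohomological proof, you must genuinely establish $HL^2(\mathfrak{s},I)=\{0\}$ for zero left action, e.g.\ by an invariance/Casimir argument carried out on the Leibniz complex itself together with the trivial-coefficient vanishing; be aware that such vanishing statements are delicate in the Leibniz category (they fail for semisimple non-Lie Leibniz algebras), so the left-action-zero hypothesis must be used in an essential way.
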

Indeed, consider a finite-dimensional Leibniz algebra $L$ and apply Levi-Malcev decomposition to its liezation $\mathfrak{g}_L$. Applying Proposition \ref{section} for an epimorphism $g\circ f$ one obtains a section: 
\begin{center}
	    \begin{tikzcd}[ampersand replacement=\&]
			L\arrow[two heads]{r}{f}\arrow[leftarrow, dashed, ]{dr}{}
			\&\mathfrak{g}_L=\mathfrak{s}\ltimes \text{rad}(\mathfrak{g})\arrow[two heads]{d}{g}\\
			\&\mathfrak{s}
		\end{tikzcd}    
	\end{center}
Clearly, the kernel of the epimorphism $g\circ f$ is $\textrm{Rad}(L)$ and we have an analogue of Levi decomposition for Leibniz algebra $L\cong \mathfrak{s}\ltimes \textrm{Rad}(L)$. The Malcev part of the theorem is not true in general for the case of Leibniz algebras as shown in \cite{Barnes}. In some cases, conjugacy of Levi subalgebras is possible (see \cite{onLevi1} and \cite{onLevi2}).
	 
\begin{defn}
A Leibniz algebra $L$ is called \textit{semisimple} if its liezation $\mathfrak{g}_L$ is a semisimple Lie algebra.
\end{defn}
\noindent Note that, from Levi's decomposition it follows that $L$ is a semisimple Leibniz algebra if and only if $\text{Rad}(L)=I$. Furthermore, there is the following 
\begin{cor}\label{semisimple_corollary}
	  Let $L$ be a finite-dimensional   semisimple   Leibniz algebra. Then  $L\cong \mathfrak{g}_L \ltimes I$, where $\mathfrak{g}_L$ is a semisimple Lie algebra (liezation of $L$). 
\end{cor}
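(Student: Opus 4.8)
The plan is to upgrade the short exact sequence $I \hookrightarrow L \xrightarrow{f} \mathfrak{g}_L$ to an honest semidirect product by exhibiting a Lie subalgebra of $L$ complementary to $I$. Since $L$ is semisimple by hypothesis, its liezation $\mathfrak{g}_L$ is a semisimple Lie algebra, so Proposition~\ref{section} applies to $f\colon L\to\mathfrak{g}_L$ and produces a section $s\colon\mathfrak{g}_L\to L$, that is, a Leibniz algebra homomorphism with $f\circ s=\mathrm{id}_{\mathfrak{g}_L}$. Being a section, $s$ is injective, and since $\mathfrak{g}_L$ is a Lie algebra every element of $s(\mathfrak{g}_L)$ is of the form $s(g)$ with $[s(g),s(g)]=s([g,g])=0$, so $s(\mathfrak{g}_L)$ is a Lie subalgebra of $L$ isomorphic to $\mathfrak{g}_L$. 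Moreover $s(\mathfrak{g}_L)\cap I=\{0\}$ because $f$ annihilates $I$, and $L=s(\mathfrak{g}_L)+I$ since $\ell-s(f(\ell))\in\ker f=I$ for every $\ell\in L$. Hence $L=s(\mathfrak{g}_L)\oplus I$ as vector spaces.

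Next I would write down the expected isomorphism. Equip $I$ with the right $\mathfrak{g}_L$-module structure $i.g=[i,s(g)]$ (well defined for the fixed $s$, and independent of it since $[I,I]\subseteq[L,I]=\{0\}$), so that the Leibniz algebra $\mathfrak{g}_L\ltimes I$ of Example~\ref{onlyexample} is defined, and consider the linear map
\[
\Phi\colon \mathfrak{g}_L\ltimes I\longrightarrow L,\qquad \Phi(g,i)=s(g)+i.
\]
To check that $\Phi$ preserves brackets, expand $\bigl[\Phi(g_1,i_1),\Phi(g_2,i_2)\bigr]=[s(g_1)+i_1,\ s(g_2)+i_2]$ by bilinearity: the term $[s(g_1),s(g_2)]$ equals $s([g_1,g_2])$ since $s$ is a homomorphism; the terms $[s(g_1),i_2]$ and $[i_1,i_2]$ lie in $[L,I]=\{0\}$ and vanish; and $[i_1,s(g_2)]$ is precisely the second component of the bracket of $(g_1,i_1)$ and $(g_2,i_2)$ in $\mathfrak{g}_L\ltimes I$. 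Thus $\bigl[\Phi(g_1,i_1),\Phi(g_2,i_2)\bigr]=s([g_1,g_2])+[i_1,s(g_2)]=\Phi\bigl([(g_1,i_1),(g_2,i_2)]\bigr)$. Finally $\Phi$ is bijective: surjectivity follows from $L=s(\mathfrak{g}_L)+I$, and injectivity from $s(\mathfrak{g}_L)\cap I=\{0\}$ together with injectivity of $s$ (equivalently, a dimension count $\dim(\mathfrak{g}_L\ltimes I)=\dim\mathfrak{g}_L+\dim I=\dim L$). Hence $\Phi$ is an isomorphism of Leibniz algebras onto $L$, and $\mathfrak{g}_L$ is semisimple, which is the assertion.

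The genuine content is entirely contained in Proposition~\ref{section} (Pirashvili's splitting); everything else is bookkeeping. The only point requiring a little care is reconciling sign conventions: the bracket of $\mathfrak{g}_L\ltimes I$ in Example~\ref{onlyexample} is written using a left action $g.m$, whereas $I$ is naturally a right module via $i.g=[i,s(g)]$, so one must verify that after passing between the two (via $g.i=-i.g$) the cross term in $[s(g_1)+i_1,\ s(g_2)+i_2]$ is exactly $[i_1,s(g_2)]$ with the correct sign, and that $[I,I]=\{0\}$ and $[s(\mathfrak{g}_L),I]=\{0\}$ indeed follow from $[L,I]=\{0\}$. I expect no obstacle beyond this.
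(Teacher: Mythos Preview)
Your proof is correct and follows essentially the same approach as the paper. The paper states the corollary without a separate proof, deriving it immediately from the Levi decomposition argument just above (apply Proposition~\ref{section} to the projection onto the semisimple part of $\mathfrak{g}_L$, identify the kernel with $\mathrm{Rad}(L)$, and specialize to the semisimple case where $\mathfrak{g}_L$ is already semisimple and $\mathrm{Rad}(L)=I$); you simply apply Proposition~\ref{section} directly to $f\colon L\to\mathfrak{g}_L$ and spell out the resulting isomorphism $\Phi$ explicitly, which is the same argument with the intermediate Levi step absorbed.
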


Since $I$ is a $\mathfrak{g}_L$-module, and over a semisimple Lie algebra by Weyl's semisimplicity $I$ decomposes into a direct sum of simple $\mathfrak{g}_L$-submodules, one obtains
\begin{equation}\label{decomposition}
	\displaystyle L= (\oplus_{i=1}^m\mathfrak{g}_i)\ltimes (\oplus_{k=1}^n I_k)
\end{equation}
where $\mathfrak{g}_i$'s are simple Lie algebras and $I_k$'s are simple $\oplus_{i=1}^m\mathfrak{g}_i$-modules. 

Let a semisimple Leibniz algebra $L$ be decomposable, that is $L=L_1\oplus \ldots \oplus L_t$ and $L_1,\dots, L_t$ are indecomposable Leibniz algebras. Obviously, $L_1$ is also semisimple and Corollary \ref{semisimple_corollary} implies  $L_1=\mathfrak{g}_1\ltimes I_1$, where $\mathfrak{g}_1$ as the liezation of $L_1$ must be a subalgebra of $\mathfrak{g}_L$. Thus, it is a direct sum of some simple components of $\mathfrak{g}_L$. Since $L_1\unlhd L$, then $I_1$ is a $\mathfrak{g}_L$-module and using $I_1\subseteq I$ it is a sum of simple $\mathfrak{g}$-submodules of $I$. This implies that not only $L_1$ admits the structure of decomposition (\ref{decomposition}), but $L_1=(\oplus_{i\in A} \mathfrak{g}_i)\ltimes (\oplus_{k\in B} I_k)$ for some $A\subseteq\{1,\dots, m\},\ B\subseteq\{ 1,\dots, n\}$. Hence, the study of the structure of a semisimple Leibniz algebra is reduced to the study of an indecomposable semisimple Leibniz algebra. \\

We use the following result from \cite[Theorem 6]{bookWan} on the structure of a simple Lie module over a semisimple Lie algebra. 

\begin{thm}\label{theorem_Wan}
	Let $M$ be a finite-dimensional simple module over a finite-dimensional semisimple Lie algebra $\mathfrak{g}=\displaystyle \oplus_{i=1}^n \mathfrak{g}_i$. Then $M\cong \otimes_{i=1}^n M_i$, where $M_i$ is a simple  $\mathfrak{g}_i$-module for all $i=1,\dots,n$.
\end{thm}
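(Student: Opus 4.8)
The plan is to prove Theorem~\ref{theorem_Wan} by induction on $n$, the number of simple summands of $\mathfrak{g}$. The base case $n=1$ is trivial. For the inductive step, write $\mathfrak{g}=\mathfrak{g}_1\oplus\mathfrak{h}$ where $\mathfrak{h}=\bigoplus_{i=2}^n\mathfrak{g}_i$, so it suffices to treat $n=2$ and then iterate: I would show that a finite-dimensional simple module $M$ over $\mathfrak{g}_1\oplus\mathfrak{h}$ decomposes as $M_1\otimes M'$ with $M_1$ a simple $\mathfrak{g}_1$-module and $M'$ a simple $\mathfrak{h}$-module, and then apply the inductive hypothesis to $M'$ over $\mathfrak{h}$.

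For the two-factor case, the key step is to analyze the restriction of $M$ to the ideal $\mathfrak{g}_1$. Since $\mathfrak{g}_1$ is semisimple, by Weyl's theorem $M$ is completely reducible as a $\mathfrak{g}_1$-module, so I can write the isotypic decomposition $M=\bigoplus_\lambda M_\lambda$ over the distinct isomorphism classes $\lambda$ of simple $\mathfrak{g}_1$-modules appearing. Because $\mathfrak{g}_1$ is an ideal, the action of $\mathfrak{h}$ commutes with that of $\mathfrak{g}_1$ (the bracket $[\mathfrak{g}_1,\mathfrak{h}]=0$ forces $x.(y.m)-y.(x.m)=0$ for $x\in\mathfrak{g}_1$, $y\in\mathfrak{h}$), hence $\mathfrak{h}$ permutes — in fact stabilizes — each isotypic component $M_\lambda$. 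Simplicity of $M$ as a $\mathfrak{g}$-module then forces a single isotypic component: $M=M_\lambda$ for one $\lambda$, i.e. $M\cong M_1\otimes V$ as a $\mathfrak{g}_1$-module, where $M_1$ is the simple $\mathfrak{g}_1$-module of type $\lambda$ and $V=\mathrm{Hom}_{\mathfrak{g}_1}(M_1,M)$ is the multiplicity space. The identification $M\cong M_1\otimes V$ is one of $\mathfrak{g}_1\oplus\mathfrak{h}$-modules once one checks that $\mathfrak{h}$ acts trivially on the $M_1$-factor and through some representation on $V$: this is a form of the double-commutant/Schur argument, since $\mathrm{End}_{\mathfrak{g}_1}(M)\cong \mathrm{End}(V)$ and the image of $\mathfrak{h}$ lies in this commutant.

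It then remains to see that $V$ is a simple $\mathfrak{h}$-module: any $\mathfrak{h}$-submodule $W\subseteq V$ yields the $\mathfrak{g}$-submodule $M_1\otimes W$ of $M$, so simplicity of $M$ forces $W=0$ or $W=V$. Finally, complete reducibility of $V$ over the semisimple algebra $\mathfrak{h}$ is automatic, and simplicity has just been established, so $V$ is a simple $\mathfrak{h}$-module and the inductive hypothesis applies to write $V\cong\bigotimes_{i=2}^n M_i$. Assembling, $M\cong M_1\otimes\bigl(\bigotimes_{i=2}^n M_i\bigr)=\bigotimes_{i=1}^n M_i$ with each $M_i$ a simple $\mathfrak{g}_i$-module.

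The main obstacle is the correct bookkeeping of the $\mathfrak{h}$-action in the identification $M\cong M_1\otimes V$: one must be careful that the commuting actions of $\mathfrak{g}_1$ and $\mathfrak{h}$ interact so that $\mathfrak{h}$ genuinely acts only on the multiplicity space, which is where Schur's lemma (valid here since $\mathbb{K}$ is algebraically closed of characteristic zero and $M_1$ is absolutely simple) and the isotypic-component structure do the real work; everything else is a routine application of Weyl's semisimplicity theorem. Since this is a cited result from \cite{bookWan}, I would most likely just invoke it, but the above is the self-contained argument I would write if a proof were required.
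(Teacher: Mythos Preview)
Your proposal is correct, and you have already correctly identified the situation: the paper does not prove this theorem at all but simply cites it as \cite[Theorem~6]{bookWan}. Your self-contained argument via the isotypic decomposition of $M|_{\mathfrak{g}_1}$, Schur's lemma on the multiplicity space, and induction is the standard proof and would serve perfectly well if one were required; nothing needs to be added.
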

\section{Main Results}

\begin{prop}\label{Ig=I or 0} Let $L$ be a finite-dimensional semisimple Leibniz algebra. Then $[I_k,\mathfrak{g}_i]=I_k$ or $\{0\}$ for all indexes $i$ and $k$ of the decomposition (\ref{decomposition}).
\end{prop}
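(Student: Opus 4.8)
The plan is to fix indices $i$ and $k$ and show that the subspace $J:=[I_k,\mathfrak{g}_i]=\mathrm{Span}\langle [x,y]\mid x\in I_k,\ y\in\mathfrak{g}_i\rangle$ is a $\mathfrak{g}_L$-submodule of $I_k$. Since $I_k$ is a simple $\mathfrak{g}_L$-module in the decomposition (\ref{decomposition}), this forces $J=I_k$ or $J=\{0\}$, which is precisely the assertion.

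First I would invoke Corollary \ref{semisimple_corollary} to work inside $L=\mathfrak{g}_L\ltimes I$, so that the right $\mathfrak{g}_L$-module action on $I$ is literally the bracket in $L$ (the section being the inclusion $\mathfrak{g}_L\hookrightarrow L$). From the formula for the bracket in $\mathfrak{g}\ltimes M$ one gets $[(0,m),(g,0)]=(0,-g.m)$, so since $I_k$ is stable under the action of $\mathfrak{g}_i\subseteq\mathfrak{g}_L$ (it is a summand of the Weyl decomposition of $I$), we have $[I_k,\mathfrak{g}_i]\subseteq I_k$; thus $J$ is a subspace of $I_k$.

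The key step is to check that $J$ is stable under right multiplication by an arbitrary $z\in\mathfrak{g}_L$. Take a generator $[x,y]$ with $x\in I_k$ and $y\in\mathfrak{g}_i$, and apply the Leibniz identity (\ref{Leibnizidentity}): $[[x,y],z]=[[x,z],y]+[x,[y,z]]$. Here $[x,z]\in I_k$ because $I_k$ is a $\mathfrak{g}_L$-submodule, so $[[x,z],y]\in J$; and $[y,z]\in\mathfrak{g}_i$ because $\mathfrak{g}_L=\oplus_j\mathfrak{g}_j$ is a direct sum of simple ideals, whence $[\mathfrak{g}_i,\mathfrak{g}_L]\subseteq\mathfrak{g}_i$, so $[x,[y,z]]\in J$ as well. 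Hence $[[x,y],z]\in J$, and by linearity of right multiplication by $z$ the whole of $J$ is carried into $J$; therefore $J$ is a $\mathfrak{g}_L$-submodule of $I_k$, and simplicity of $I_k$ finishes the argument. Note that this single computation handles uniformly the two cases $z\in\mathfrak{g}_i$ and $z\in\mathfrak{g}_j$ with $j\neq i$ (in the latter $[y,z]=0$). An alternative route is via Theorem \ref{theorem_Wan}: writing $I_k\cong\otimes_j M_j$ with $M_j$ a simple $\mathfrak{g}_j$-module, the component $\mathfrak{g}_i$ acts only on the $i$-th factor, so $[I_k,\mathfrak{g}_i]=\big(\otimes_{j\neq i}M_j\big)\otimes [M_i,\mathfrak{g}_i]$, and $[M_i,\mathfrak{g}_i]$ is a $\mathfrak{g}_i$-submodule of the simple module $M_i$, hence $M_i$ or $\{0\}$ (the latter exactly when $M_i$ is the trivial one-dimensional $\mathfrak{g}_i$-module).

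I do not expect a genuine obstacle. The only point demanding a little care is that $J$ must be shown closed under the action of all of $\mathfrak{g}_L$, not merely of $\mathfrak{g}_i$ — and this is exactly where the Leibniz identity together with the ideal structure of the simple summands $\mathfrak{g}_j$ of $\mathfrak{g}_L$ is used.
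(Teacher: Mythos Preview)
Your main argument is correct and takes a genuinely different route from the paper. The paper's proof goes straight through Theorem~\ref{theorem_Wan}: it writes $I_k\cong\bigotimes_j J_j$ with $J_j$ a simple $\mathfrak{g}_j$-module, observes that $\mathfrak{g}_i$ acts only on the $i$-th tensor factor, and then uses simplicity of $J_i$ to conclude $[I_k,\mathfrak{g}_i]$ is $I_k$ or $\{0\}$ --- exactly the ``alternative route'' you sketch at the end. Your primary argument instead avoids the tensor decomposition altogether: you show directly via the Leibniz identity and the fact that each $\mathfrak{g}_i$ is an ideal of $\mathfrak{g}_L$ that $J=[I_k,\mathfrak{g}_i]$ is a $\mathfrak{g}_L$-submodule of the simple module $I_k$. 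This is more elementary (it does not need Theorem~\ref{theorem_Wan} or even the full Weyl theorem on $I$, only simplicity of the given $I_k$) and would work over any field where the decomposition~(\ref{decomposition}) is available. What the paper's approach buys in return is the extra information in its last sentence: $[I_k,\mathfrak{g}_i]=\{0\}$ precisely when the $i$-th tensor factor is the trivial one-dimensional module, which is used implicitly later when constructing Leibniz algebras from bipartite graphs.
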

\begin{proof}
	Without loss of generality let us consider $[I_1,\mathfrak{g}_1]$.  Since $I_1$ is a simple $\oplus_{i=1}^n \mathfrak{g}_i$-module, by Theorem \ref{theorem_Wan} $I_1=\otimes_{i=1}^n J_i$ for simple $\mathfrak{g}_i$-modules $J_i$. Note that the action is $[I_1,\mathfrak{g}_1]=[J_1\otimes \dots \otimes J_n,\mathfrak{g}_1]=(J_1\otimes \dots \otimes J_n).\mathfrak{g}_1=(J_1.\mathfrak{g}_1)\otimes J_2\otimes\dots \otimes J_n$. Now this is either $\{ 0\}$ or $J_1\otimes\dots \otimes J_n=I_1$ since $J_1$ is an irreducible $\mathfrak{g}_1$-module.    Moreover, it is well-known from representation theory of Lie algebras that $[\mathfrak{g}_1, I_1]=\{0\}$ if and only if $J_1=\mathbb{C}$ (a one-dimensional representation is trivial). 
\end{proof}

\begin{defn}(\cite{AKOZh})
	Let $L$ be a semisimple Leibniz algebra with decomposition (\ref{decomposition}). Two Lie algebras $\mathfrak{g}_i$ and $\mathfrak{g}_j$ from decomposition (\ref{decomposition}) are called \textit{adjacent} if there exists $k$ such that $[I_k,\mathfrak{g}_i]=I_k=[I_k,\mathfrak{g}_j]$. 
\end{defn}

It is implicit from this definition that for a semisimple Leibniz algebra $L$ with decomposition (\ref{decomposition}), a graph $\Gamma$  with vertexes $\{\mathfrak{g}_1,\dots, \mathfrak{g}_m\}$ is built.  Two vertexes $\mathfrak{g}_i$ and $\mathfrak{g}_j$ are connected by an edge if  $[I_k,\mathfrak{g}_i]=I_k=[I_k,\mathfrak{g}_j]$ for some $k$. The description of indecomposable semisimple Leibniz algebra is established in \cite[Theorem 3.5]{AKOZh} and rephrased in terms of connectivity of the graph $\Gamma$ below.
	
\begin{thm}\label{graph_ideal}
	Let $L$ be an indecomposable semisimple Leibniz algebra with decomposition (\ref{decomposition}). Then $[I_k, \oplus_{i=1}^m \mathfrak{g}_i]=I_k$ for all $1\leq k\leq n$ and $\Gamma$ is a connected graph.
\end{thm}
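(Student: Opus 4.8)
The plan is to prove the two assertions separately, deriving the first (every $I_k$ is a nontrivial module for the whole Lie algebra) as a structural fact about the module action, and then using it to establish connectedness of $\Gamma$ by a contradiction argument based on indecomposability. For the first part, fix $k$ and consider $[I_k, \oplus_{i=1}^m \mathfrak{g}_i] = \sum_{i=1}^m [I_k, \mathfrak{g}_i]$. By Proposition \ref{Ig=I or 0}, each summand $[I_k,\mathfrak{g}_i]$ is either $I_k$ or $\{0\}$, so the sum is $I_k$ precisely when at least one $i$ satisfies $[I_k,\mathfrak{g}_i]=I_k$; otherwise $[I_k, \oplus_{i=1}^m \mathfrak{g}_i]=\{0\}$. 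I would rule out the latter possibility: if $[I_k,\mathfrak{g}_i]=\{0\}$ for every $i$, then (writing $I_k = \otimes_{i=1}^m J_i$ via Theorem \ref{theorem_Wan}, where each $J_i$ is a simple $\mathfrak{g}_i$-module) each $J_i$ must be the trivial one-dimensional module, hence $I_k$ is one-dimensional with trivial $\mathfrak{g}_L$-action, i.e. $[I_k, L]=\{0\}$ and also $[L,I_k]\subseteq[L,I]=\{0\}$. Thus $I_k$ is a one-dimensional two-sided ideal of $L$ on which all brackets vanish, so $L = I_k \oplus L'$ where $L'$ is the span of $\oplus_i \mathfrak{g}_i$ together with the remaining $I_j$; one checks $L'$ is an ideal (it is a submodule complement and a subalgebra), contradicting indecomposability of $L$. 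Hence $[I_k, \oplus_i \mathfrak{g}_i]=I_k$ for all $k$.

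For the second part, suppose for contradiction that $\Gamma$ is disconnected, and let its vertex set split as a disjoint union $\{\mathfrak{g}_i : i \in A\} \sqcup \{\mathfrak{g}_i : i \in A'\}$ with no edges between the two parts, $A, A' \neq \emptyset$. Put $\mathfrak{a} = \oplus_{i\in A}\mathfrak{g}_i$ and $\mathfrak{a}' = \oplus_{i\in A'}\mathfrak{g}_i$. Each simple submodule $I_k$ satisfies, by the first part, $[I_k, \oplus_i \mathfrak{g}_i]=I_k$, and by Proposition \ref{Ig=I or 0} the set of $i$ with $[I_k,\mathfrak{g}_i]=I_k$ is nonempty; I claim this set lies entirely in $A$ or entirely in $A'$. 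Indeed, if $[I_k,\mathfrak{g}_i]=I_k$ for some $i\in A$ and $[I_k,\mathfrak{g}_j]=I_k$ for some $j\in A'$, then $\mathfrak{g}_i$ and $\mathfrak{g}_j$ would be adjacent — an edge across the partition, contradiction. So the index set $\{1,\dots,n\}$ of modules partitions as $B \sqcup B'$, where $I_k$ for $k\in B$ is acted on nontrivially only by factors in $A$ (and trivially by all factors in $A'$, again using Theorem \ref{theorem_Wan} to see that $[I_k,\mathfrak{g}_j]=\{0\}$ forces the corresponding tensor factor $J_j$ to be trivial), and symmetrically for $k\in B'$.

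Now set $L_1 = \mathfrak{a} \ltimes (\oplus_{k\in B} I_k)$ and $L_2 = \mathfrak{a}' \ltimes (\oplus_{k\in B'} I_k)$. I would verify that both are ideals of $L$ and that $L = L_1 \oplus L_2$ as Leibniz algebras. The underlying vector space is clearly the direct sum. To see $L_1$ is an ideal: brackets among elements of $\mathfrak{a}$ stay in $\mathfrak{a}\subseteq L_1$; brackets $[\mathfrak{g}_i, I_k]$ and $[I_k,\mathfrak{g}_i]$ for $i\in A$ land in $\oplus_{k\in B}I_k$ (here one uses that $I_k$ for $k\in B$ is a module over $\mathfrak a$ and that $\mathfrak a$ acts trivially on $I_k$ for $k\in B'$, and that $\mathfrak{a}'$ acts trivially on $I_k$ for $k\in B$); brackets $[\mathfrak{a}', L_1]$ vanish on the module part and equal zero on the $\mathfrak a$ part since $[\mathfrak{g}_j,\mathfrak{g}_i]=\{0\}$ for $i\in A,\ j\in A'$; and $[I_k,I_\ell]=\{0\}$ since $I\subseteq \mathrm{Rad}(L)$ and $[I,I]\subseteq[L,I]=\{0\}$. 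The analogous checks work for $L_2$, and $[L_1,L_2]=\{0\}$ follows from the same computations. This exhibits $L$ as a direct sum of two proper ideals, contradicting indecomposability. Therefore $\Gamma$ is connected.

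The main obstacle I anticipate is the bookkeeping in the last paragraph: one must be careful that the ``cross'' brackets $[\mathfrak g_i, I_k]$ with $i$ in one part and $I_k$ in the other genuinely vanish, which is exactly where Theorem \ref{theorem_Wan} is essential — without the tensor-product decomposition one cannot conclude that $[I_k,\mathfrak{g}_j]=\{0\}$ propagates to the statement that $\mathfrak g_j$ annihilates $I_k$ as part of a module complement. Everything else is a routine verification that the proposed decomposition respects the Leibniz bracket, together with the observation that $I$ being central-on-the-right ($[L,I]=\{0\}$) trivializes all brackets with first argument in a module factor.
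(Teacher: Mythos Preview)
Your proof is correct and follows the same strategy as the paper's: invoke Proposition~\ref{Ig=I or 0} for the $I_k$/$\{0\}$ dichotomy, rule out the trivially-acted case by exhibiting $I_k$ as a direct summand (the paper also remarks parenthetically that this contradicts $I$ being spanned by squares), and then, assuming $\Gamma$ is disconnected, collect the $\mathfrak g_i$'s of one component together with the $I_k$'s supported there into a proper direct summand of $L$. Your write-up simply supplies the ideal-verification details that the paper's five-line argument leaves implicit.
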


\begin{proof} If for some $1\leq k \leq n$ one has $[I_k,\mathfrak{g}_i]=\{0\}$ for all $1\leq i \leq m$, then $I_k$ is a direct summand of $L$ (in fact, it is a contradiction with $I_k\subseteq I$ being generated by the squares). Thus, from Proposition \ref{Ig=I or 0} it follows that $[I_k,\mathfrak{g}_i]=I_k$ for some $1\leq i \leq m$ which implies the first part of the statement. If graph $\Gamma$ is disconnected, let $\{\mathfrak{g}_i\}_{i\in A}$ be some connected component of $\Gamma$. Using Proposition \ref{Ig=I or 0} build a set of indexes $B$, where for $p\in B$ there is some $q\in A$ such that $[I_p,\mathfrak{g}_q]=I_p$. Then $(\oplus_{i\in A} \mathfrak{g}_i)\ltimes (\oplus_{k\in B} I_k)$ is a direct summand of the Leibniz algebra $L$, which is a contradiction.
\end{proof}
Using decomposition (\ref{decomposition}) consider another associated graph on the vertexes $\{\mathfrak{g}_1,\dots, \mathfrak{g}_m,I_1,\dots, I_n \}$, in which two vertices are connected via an edge whenever the bracket of the end-points is a non-zero set. Note that, due to $[I_p,I_q]= [\mathfrak{g}_i,\mathfrak{g}_j]=\{0\}$, the only edges are from the set of simple Lie algebras $\{\mathfrak{g}_1,\dots, \mathfrak{g}_m\}$ to the set of simple $\mathfrak{g}_L$-modules $\{I_1,\dots, I_n\}$. 

\begin{defn} For a semisimple Leibniz algebra $L$ define its  \textit{corresponding} undirected bipartite graph B$\Gamma$ using decomposition (\ref{decomposition}) with bipartition $(\mathcal{I},\mathcal{G})$, where $\mathcal{I}=\{I_1,\dots, I_n\}, \ \mathcal{G}=\{\mathfrak{g}_1,\dots, \mathfrak{g}_m\}$ and $I_k\mathfrak{g}_i$ is an edge if and only if $[I_k,\mathfrak{g}_i]=I_k$.
\end{defn}

\begin{thm}\label{indecomposable_iff_connected}
A finite-dimensional semisimple Leibniz algebra is indecomposable if and only if the associated bipartite graph   $\textrm{B}\Gamma$ is connected.
\end{thm}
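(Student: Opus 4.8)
The plan is to prove the equivalent statement that $L$ is \emph{decomposable} if and only if $\textrm{B}\Gamma$ is \emph{disconnected}. Throughout I use Proposition \ref{Ig=I or 0}, which makes $[I_k,\mathfrak{g}_i]$ equal to $I_k$ or $\{0\}$, so that the edge set of $\textrm{B}\Gamma$ is unambiguous, together with the identities $[\mathfrak{g}_i,\mathfrak{g}_j]=\{0\}$ ($i\neq j$), $[\mathfrak{g}_i,I_k]=\{0\}$ and $[I_k,I_l]=\{0\}$ coming from decomposition (\ref{decomposition}) and the product of Example \ref{onlyexample}; thus among the summands the only brackets that can be nonzero are the $[I_k,\mathfrak{g}_i]$, and such a bracket is nonzero exactly when $I_k\mathfrak{g}_i$ is an edge. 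I also use the observation already exploited in the proof of Theorem \ref{graph_ideal}: since $I$ is spanned by squares, it contains no nonzero central ideal of $L$ — in particular no nonzero trivial $\mathfrak{g}_L$-submodule — so no $I_k$ is the trivial module and every $I_k$ carries at least one edge.

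Suppose first that $\textrm{B}\Gamma$ is disconnected, and split its vertex set into two nonempty, mutually non-adjacent parts $\{\mathfrak{g}_i\}_{i\in A}\cup\{I_k\}_{k\in B}$ and $\{\mathfrak{g}_i\}_{i\in A'}\cup\{I_k\}_{k\in B'}$ with $A\sqcup A'=\{1,\dots,m\}$ and $B\sqcup B'=\{1,\dots,n\}$. Set $L_1=\big(\oplus_{i\in A}\mathfrak{g}_i\big)\oplus\big(\oplus_{k\in B}I_k\big)$ and $L_2=\big(\oplus_{i\in A'}\mathfrak{g}_i\big)\oplus\big(\oplus_{k\in B'}I_k\big)$. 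Every bracket of an element of $L_1$ with an element of $L_2$ vanishes: this is immediate for brackets of two distinct simple Lie summands and for brackets of the shapes $[\mathfrak{g}_i,I_k]$ and $[I_k,I_l]$, while a bracket $[I_k,\mathfrak{g}_j]$ with $I_k$ and $\mathfrak{g}_j$ in opposite parts is $\{0\}$ because $I_k\mathfrak{g}_j$ is not an edge and Proposition \ref{Ig=I or 0} leaves no other possibility. Hence $L_1$ and $L_2$ are ideals, $L=L_1\oplus L_2$, and both are nonzero since each part is a nonempty family of summands; so $L$ is decomposable.

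For the converse, let $L=L_1\oplus L_2$ with $L_1,L_2$ nonzero ideals. Taking liezations gives $\mathfrak{g}_L=\mathfrak{g}_{L_1}\oplus\mathfrak{g}_{L_2}$, and since ideals of the semisimple Lie algebra $\mathfrak{g}_L=\oplus_i\mathfrak{g}_i$ are sums of simple components we get $\mathfrak{g}_{L_1}=\oplus_{i\in A}\mathfrak{g}_i$ and $\mathfrak{g}_{L_2}=\oplus_{i\in A'}\mathfrak{g}_i$ for a partition $A\sqcup A'=\{1,\dots,m\}$; likewise $I=(I\cap L_1)\oplus(I\cap L_2)$ as $\mathfrak{g}_L$-modules. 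The step needing care is that each simple summand $I_k$ of decomposition (\ref{decomposition}) lies wholly in $I\cap L_1$ or in $I\cap L_2$: a simple submodule with nonzero projection to each would embed into both, hence be annihilated by $\mathfrak{g}_{L_1}$ and by $\mathfrak{g}_{L_2}$ (because $[L_1,L_2]=\{0\}$), hence be a nonzero trivial $\mathfrak{g}_L$-submodule of $I$, which is excluded — equivalently, one may simply invoke the reduction already carried out in the discussion preceding Theorem \ref{theorem_Wan}. Thus $B:=\{k:I_k\subseteq L_1\}$ and $B':=\{k:I_k\subseteq L_2\}$ partition $\{1,\dots,n\}$, and $[I_k,\mathfrak{g}_i]\subseteq[L_1,L_2]=\{0\}$ whenever $k\in B$ and $i\in A'$ and symmetrically; so no edge of $\textrm{B}\Gamma$ joins $\{\mathfrak{g}_i\}_{i\in A}\cup\{I_k\}_{k\in B}$ to $\{\mathfrak{g}_i\}_{i\in A'}\cup\{I_k\}_{k\in B'}$, both of which are nonempty since $L_1,L_2\neq\{0\}$. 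Hence $\textrm{B}\Gamma$ is disconnected. (This implication also drops out of Theorem \ref{graph_ideal}: if $\Gamma$ is connected then all the $\mathfrak{g}_i$ lie in a single component of $\textrm{B}\Gamma$, and each $I_k$ is joined to one of them because $[I_k,\oplus_i\mathfrak{g}_i]=I_k\neq\{0\}$.)

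I expect the one genuine difficulty to be the compatibility claim in the converse — that the chosen decomposition (\ref{decomposition}) cannot have an $I_k$ straddling two direct summands of $L$ — which is settled precisely by the fact that the square-spanned ideal $I$ contains no nonzero trivial submodule; everything else is a routine check of which brackets vanish.
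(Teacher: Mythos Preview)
Your proof is correct and follows the same plan as the paper: a connected component of $\mathrm{B}\Gamma$ yields a direct summand, and conversely a direct-sum decomposition of $L$ splits the vertex set of $\mathrm{B}\Gamma$. Your converse is in fact more careful than the paper's, which simply asserts the implication (leaning on the discussion preceding Theorem~\ref{theorem_Wan}); your explicit argument that no simple $I_k$ can straddle the two summands---via the absence of nonzero trivial $\mathfrak{g}_L$-submodules in $I$---supplies exactly the detail the paper leaves implicit.
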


\begin{proof}
Assume that B$\Gamma$ is not connected. Let $(\{g_i\}_{i\in A}, \{I_k\}_{k\in B})$ be one of the connected components of B$\Gamma$.   Then $(\bigoplus_{i\in A} g_i )\ltimes (\bigotimes_{k\in B} I_k)$ is a direct summand of the Leibniz algebra, thus the algebra is decomposable.  

Conversely, if the Leibniz algebra is decomposable, then the corresponding bipartite graph is disconnected.  
\end{proof} 

The following corollary establishes that the converse of Theorem \ref{graph_ideal} is also valid.

\begin{cor} For a semisimple Leibniz algebra the following conditions are equivalent:
	
	$(i)$ $[I_k,\oplus_{i=1}^m \mathfrak{g}_i]=I_k$ for all $k=1,\dots n$ and graph   $\Gamma$   is connected;
	
	$(ii)$ B$\Gamma$ is connected. 
\end{cor}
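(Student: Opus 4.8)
The plan is to show the two conditions are equivalent by relating the connectivity of the auxiliary graph $\Gamma$ (whose vertices are only the $\mathfrak{g}_i$) to the connectivity of the bipartite graph $\mathrm{B}\Gamma$ (whose vertices are the $\mathfrak{g}_i$ together with the $I_k$). The crucial bookkeeping observation is Proposition \ref{Ig=I or 0}: every bracket $[I_k,\mathfrak{g}_i]$ is either $\{0\}$ or all of $I_k$, so an edge $I_k\mathfrak{g}_i$ in $\mathrm{B}\Gamma$ exists precisely when $[I_k,\mathfrak{g}_i]\neq\{0\}$, and the adjacency defining $\Gamma$ says $\mathfrak{g}_i\sim\mathfrak{g}_j$ exactly when some common $I_k$ is connected to both $\mathfrak{g}_i$ and $\mathfrak{g}_j$ in $\mathrm{B}\Gamma$. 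In other words, $\Gamma$ is (up to isolated vertices) the ``square'' of $\mathrm{B}\Gamma$ restricted to the $\mathcal{G}$-side, once we know that no $I_k$ is isolated.

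First I would prove $(ii)\Rightarrow(i)$. Assume $\mathrm{B}\Gamma$ is connected. Then no vertex is isolated; in particular each $I_k$ has at least one neighbour $\mathfrak{g}_i$, so $[I_k,\mathfrak{g}_i]=I_k$ for some $i$, and summing over all $i$ (using Proposition \ref{Ig=I or 0}) gives $[I_k,\oplus_{i=1}^m\mathfrak{g}_i]=I_k$ for every $k$. For connectivity of $\Gamma$: given $\mathfrak{g}_i,\mathfrak{g}_j$, take a path in $\mathrm{B}\Gamma$ between them; because $\mathrm{B}\Gamma$ is bipartite with parts $\mathcal{I}$ and $\mathcal{G}$, this path alternates sides and so has the form $\mathfrak{g}_i=\mathfrak{g}_{i_0},\,I_{k_1},\,\mathfrak{g}_{i_1},\,I_{k_2},\dots,\mathfrak{g}_{i_\ell}=\mathfrak{g}_j$, and each consecutive triple $\mathfrak{g}_{i_{t-1}},I_{k_t},\mathfrak{g}_{i_t}$ witnesses $[I_{k_t},\mathfrak{g}_{i_{t-1}}]=I_{k_t}=[I_{k_t},\mathfrak{g}_{i_t}]$, i.e. $\mathfrak{g}_{i_{t-1}}$ and $\mathfrak{g}_{i_t}$ are adjacent in $\Gamma$. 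Concatenating gives a walk in $\Gamma$ from $\mathfrak{g}_i$ to $\mathfrak{g}_j$, so $\Gamma$ is connected.

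Next, $(i)\Rightarrow(ii)$. Assume $[I_k,\oplus_i\mathfrak{g}_i]=I_k$ for all $k$ and $\Gamma$ is connected; if the index set is nonempty, pick any $I_k$-vertex and any $\mathfrak{g}_i$-vertex of $\mathrm{B}\Gamma$. By the first hypothesis and Proposition \ref{Ig=I or 0}, each $I_k$ has some neighbour $\mathfrak{g}_{i(k)}$ in $\mathrm{B}\Gamma$; by connectivity of $\Gamma$ there is an edge-path in $\Gamma$ from $\mathfrak{g}_{i(k)}$ to $\mathfrak{g}_i$, and each edge $\mathfrak{g}_p\mathfrak{g}_q$ of that path lifts, by the definition of adjacency, to a two-edge path $\mathfrak{g}_p,I_r,\mathfrak{g}_q$ in $\mathrm{B}\Gamma$. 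Splicing the edge $I_k\mathfrak{g}_{i(k)}$ onto this lifted path exhibits a walk in $\mathrm{B}\Gamma$ from $I_k$ to $\mathfrak{g}_i$. Since every vertex is thus connected to a fixed $\mathfrak{g}_i$ (and $\mathfrak{g}_i$-vertices are connected to each other through $\Gamma$-paths lifted the same way), $\mathrm{B}\Gamma$ is connected.

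The only genuine subtlety is making sure every $I_k$ is non-isolated in $\mathrm{B}\Gamma$ — but that is exactly the content of the hypothesis $[I_k,\oplus_i\mathfrak{g}_i]=I_k$ in $(i)$ (and, for $(ii)\Rightarrow(i)$, a free consequence of connectivity). Everything else is the routine translation between paths in a bipartite graph and paths in its projection to one side, so I do not expect any real obstacle; alternatively one could simply observe that the result is immediate from Theorems \ref{graph_ideal} and \ref{indecomposable_iff_connected} together with the remark that $(i)$ is precisely the conclusion "$L$ indecomposable" phrased via $\Gamma$, but giving the direct graph-theoretic argument above is cleaner and self-contained.
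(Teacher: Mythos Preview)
Your argument is correct. Both implications are proved cleanly: for $(ii)\Rightarrow(i)$ you read off non-isolation of each $I_k$ and project bipartite paths to $\Gamma$-paths; for $(i)\Rightarrow(ii)$ you lift $\Gamma$-edges to length-two bipartite paths and attach each $I_k$ via its guaranteed neighbour. The edge cases (e.g.\ $m=1$, or empty $\mathcal{I}$) are harmless.

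The route, however, differs from the paper's. The paper does not argue with paths at all: it deduces $(ii)\Rightarrow(i)$ by composing Theorem~\ref{indecomposable_iff_connected} ($\mathrm{B}\Gamma$ connected $\Rightarrow L$ indecomposable) with Theorem~\ref{graph_ideal} ($L$ indecomposable $\Rightarrow(i)$), and for $(i)\Rightarrow(ii)$ it simply points back to the proof of Theorem~\ref{graph_ideal}, whose connected-component/direct-summand argument, run under hypothesis $(i)$, forces $\mathrm{B}\Gamma$ to be connected. So the paper passes through the algebraic notion of indecomposability, while you give a purely combinatorial equivalence between the two graphs that never mentions $L$ beyond invoking Proposition~\ref{Ig=I or 0}. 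Your approach is more self-contained and makes the corollary logically independent of Theorems~\ref{graph_ideal} and~\ref{indecomposable_iff_connected}; the paper's is shorter because those theorems are already in hand. You in fact note this alternative at the end of your write-up, so you have identified both proofs.
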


\begin{proof} By Theorems \ref{indecomposable_iff_connected} and \ref{graph_ideal} it follows that $(ii)$ implies $(i)$. Now if $(i)$ holds, then the proof of Theorem \ref{graph_ideal} implies that graph B$\Gamma$ is connected.
\end{proof}

Presented example below show that the first condition of the statement $(i)$ is essential.

\begin{ex} Consider a decomposition $(\oplus_{i=1}^4 \mathfrak{g}_i)\ltimes (I_1\oplus I_2 \oplus I_3)$ with the non-zero products:
$$[I_3,\mathfrak{g}_1]=[I_3,\mathfrak{g}_2]=I_3 \text{ and } [I_2,\mathfrak{g}_2]=[I_2,\mathfrak{g}_3]=[I_2,\mathfrak{g}_4]=I_2.$$
	
Below are the graphs $\Gamma$ and B$\Gamma$ corresponding to that decomposition:
	\begin{center}
		\begin{minipage}{0.3\textwidth}
		\begin{tikzpicture}
		\tikzstyle{vertex}=[circle,fill=black!5,minimum
		size=12pt,inner sep=1pt]
		\node[vertex](11) at (5,1){$\mathfrak{g}_1$};
		\node[vertex](12) at (6,1){$\mathfrak{g}_2$};
		\node[vertex](13) at (7,1.5){$\mathfrak{g}_3$};
		\node[vertex](14) at (7,0.5){$\mathfrak{g}_4$};
		\path[draw,thick,-] (11) -- (12); 
		\path[draw,thick,-] (12) -- (13);
		\path[draw,thick,-] (14) -- (13);
		\path[draw,thick,-] (12) -- (14);
		\end{tikzpicture}	 
	\end{minipage}
	\hspace*{1cm}
	\begin{minipage}{0.3\textwidth}
		\begin{tikzpicture}
		\tikzstyle{vertex}=[circle,fill=black!10,minimum		size=12pt,inner sep=1pt]
		\node[vertex](01) at (5.5,3){$I_1$};
		\node[vertex](02) at (6.5,3){$I_2$}; 
		\node[vertex](03) at (7.5,3){$I_3$};  
		\node[vertex](11) at (5,1){$\mathfrak{g}_1$};
		\node[vertex](12) at (6,1){$\mathfrak{g}_2$};
		\node[vertex](13) at (7,1){$\mathfrak{g}_3$};
		\node[vertex](14) at (8,1){$\mathfrak{g}_4$};
		\path[draw,blue,thick,-] (02) -- (12);
		\path[draw,blue,thick,-] (02) -- (13);
		\path[draw,blue,thick,-] (02) -- (14);
		\path[draw,blue,thick,-] (03) -- (11);
		\path[draw,blue,thick,-] (03) -- (12);
		\end{tikzpicture}
	\end{minipage}
\end{center}
Although $\Gamma$ is connected, but since $[I_1,\oplus_{i=1}^4\mathfrak{g}_i]=\{0\}$, the Leibniz algebra  cannot be indecomposable. In fact, the graph B$\Gamma$ shows that such Leibniz algebra is not only indecomposable, but does not exist since $I_1$ is not being generated by any elements of the Leibniz algebra.
\end{ex}

The next statement shows the construction of an indecomposable semisimple Leibniz algebra from any finite connected bipartite graph. 

\begin{thm}\label{construction}
	Given a finite and connected bipartite graph, there exists a finite-dimensional indecomposable semisimple Leibniz algebra whose corresponding bipartite graph is the given one. 
\end{thm}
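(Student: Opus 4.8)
The plan is to realise the prescribed incidences by taking all the vertices in $\mathcal{G}$ to be copies of $\mathfrak{sl}_2(\mathbb{C})$ and building each module in $\mathcal{I}$ as an external tensor product whose factor over a given $\mathfrak{g}_i$ is non-trivial exactly along the edges of the graph. Write the bipartition as $\mathcal{I}=\{I_1,\dots,I_n\}$ and $\mathcal{G}=\{\mathfrak{g}_1,\dots,\mathfrak{g}_m\}$; we may assume both parts are non-empty (the single-vertex graphs being degenerate), and then connectedness forces every vertex to have degree at least one. Set $\mathfrak{g}_i:=\mathfrak{sl}_2(\mathbb{C})$, let $V$ be its standard $2$-dimensional module and $\mathbb{C}$ the trivial one, and for each $k$ put $M_i^{(k)}:=V$ if $I_k\mathfrak{g}_i$ is an edge and $M_i^{(k)}:=\mathbb{C}$ otherwise. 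Let $I_k:=M_1^{(k)}\otimes\cdots\otimes M_m^{(k)}$, a module over $\mathfrak{g}:=\mathfrak{g}_1\oplus\cdots\oplus\mathfrak{g}_m$ in which $\mathfrak{g}_j$ acts only on the $j$-th factor, and finally form $L:=\mathfrak{g}\ltimes(\oplus_{k=1}^{n} I_k)$ as in Example \ref{onlyexample}.

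Then I would check the following in order. First, each $I_k$ is a simple $\mathfrak{g}$-module, since an external tensor product of simple modules over a direct sum of simple Lie algebras over an algebraically closed field is again simple (the converse of Theorem \ref{theorem_Wan}); hence the displayed presentation of $L$ has the shape of decomposition (\ref{decomposition}). Second, $L$ is a semisimple Leibniz algebra with liezation $\mathfrak{g}$: from $[(g,v),(g,v)]=(0,-g.v)$ the squares ideal is $I(L)=\oplus_{k}[I_k,\mathfrak{g}]$, and for each $k$ the vertex $I_k$ has a neighbour $\mathfrak{g}_i$, so $M_i^{(k)}=V$ is non-trivial and Proposition \ref{Ig=I or 0} gives $[I_k,\mathfrak{g}_i]=I_k$; therefore $I(L)=\oplus_k I_k$ and $\mathfrak{g}_L=L/I(L)\cong\mathfrak{g}$ is semisimple (and, since $I(L)\neq\{0\}$, $L$ is genuinely non-Lie). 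Third, the bipartite graph attached to the decomposition $L=(\oplus_i\mathfrak{g}_i)\ltimes(\oplus_k I_k)$ is precisely $\textrm{B}\Gamma$: by Proposition \ref{Ig=I or 0}, $[I_k,\mathfrak{g}_i]=I_k$ when $M_i^{(k)}$ is non-trivial, i.e.\ when $I_k\mathfrak{g}_i$ is an edge, and $[I_k,\mathfrak{g}_i]=\{0\}$ otherwise. Finally, $\textrm{B}\Gamma$ being connected, Theorem \ref{indecomposable_iff_connected} yields that $L$ is indecomposable.

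I do not anticipate a real obstacle, as the argument mainly assembles the results already in place. The one delicate point is the second step — showing that the ideal generated by the squares is all of $\oplus_k I_k$ and not a proper submodule — and this is exactly where connectedness of $\textrm{B}\Gamma$ enters, guaranteeing that every $I_k$ carries at least one non-trivial tensor factor. A minor secondary subtlety is that vertices of $\mathcal{I}$ with the same neighbourhood produce isomorphic modules $I_k$; this is harmless, since any decomposition of the semisimple module $\oplus_k I_k$ into simple submodules has the same multiset of isomorphism types and thus the same corresponding bipartite graph, but if one insists on pairwise non-isomorphic summands one may instead choose, over each adjacent $\mathfrak{g}_i$, simple $\mathfrak{g}_i$-modules of distinct, $k$-dependent highest weights.
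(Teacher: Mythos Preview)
Your proposal is correct and follows essentially the same route as the paper: build each $I_k$ as an external tensor product over $\oplus_i\mathfrak{g}_i$ with a non-trivial simple factor exactly at the edges, form $L=(\oplus_i\mathfrak{g}_i)\ltimes(\oplus_k I_k)$, and invoke Theorem~\ref{indecomposable_iff_connected}. The only differences are cosmetic---you specialise to $\mathfrak{sl}_2$ and its standard module where the paper allows arbitrary simple Lie algebras and (implicitly non-trivial) simple modules---and you are more explicit than the paper in verifying that the squares ideal is all of $\oplus_k I_k$ and in noting the harmless ambiguity when distinct $I_k$'s are isomorphic.
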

\begin{proof}
Let B$\Gamma$ be a finite, connected bipartite graph with bipartition $(V, W)$. Let us denote formally the vertices of $V$ by $v_1,\dots, v_m$ and the vertices of $W$ by $w_1,\dots, w_n$.  The essential $m \times n$ submatrix 
$$A=(a_{ij} \mid a_{ij}=1 \text{ if } v_iw_j \text{ is an edge and 0 otherwise} )_{1 \leq i \leq m, 1\leq j \leq n}$$ of the adjacency matrix of the graph $\Gamma$ contains $1$ in every row and in every column since the graph is connected. This submatrix indicates how to build an indecomposable semisimple Leibniz algebra with a corresponding bipartite graph B$\Gamma$.  Indeed, pick any simple Lie algebra $\mathfrak{g}_1,\dots, \mathfrak{g}_m$ and simple $\mathfrak{g}_i$-modules $M_{ji}$ for all $1\leq j \leq n, \ 1\leq i \leq m$. Next, define a tensor $(\oplus_{i=1}^m \mathfrak{g}_i)$-module $I_k=\otimes_{i=1}^m J_{ki}$, where $J_{ki}=M_{ki}$ if $a_{ki}=1$ and $J_{ki}=\mathbb{C}$ otherwise. By Theorem \ref{theorem_Wan} the $(\oplus_{i=1}^m \mathfrak{g}_i)$-module $I_k$ is simple for all $1\leq k \leq n$. Then $L=(\oplus_{i=1}^m \mathfrak{g}_i) \ltimes (\oplus_{k=1}^n I_k)$ is the Leibniz algebra with corresponding bipartite graph B$\Gamma$. Note that by Theorem \ref{indecomposable_iff_connected} it follows that $L$ is indecomposable.
\end{proof}

\begin{ex} Up to an isomorphism there are exactly two connected bipartite graphs with essential submatrix $A$ being a $2\times 2$ matrix:

\begin{center}
	\begin{minipage}{0.30\textwidth}
		\begin{tikzpicture}
		\tikzstyle{vertex}=[circle,fill=black!10,minimum
		size=12pt,inner sep=1pt]
		\node[vertex](01) at (5,2.5){$I_1$};
		\node[vertex](02) at (6,2.5){$I_2$};  
		\node[vertex](11) at (4.5,1){$\mathfrak{g}_1$};
		\node[vertex](12) at (6.5,1){$\mathfrak{g}_2$};
		\path[draw, blue,thick,-] (01) -- (11); 
		\path[draw,blue,thick,-] (01) -- (12);
		\path[draw,blue,thick,-] (02) -- (11);
		\path[draw,blue,thick,-] (02) -- (12);
		\end{tikzpicture}
	\end{minipage}
	\begin{minipage}{0.30\textwidth}
		\begin{tikzpicture}
		\tikzstyle{vertex}=[circle,fill=black!10,minimum
		size=12pt,inner sep=1pt]
		\node[vertex](01) at (5,2.5){$I_1$};
		\node[vertex](02) at (6,2.5){$I_2$};  
		\node[vertex](11) at (4.5,1){$\mathfrak{g}_1$};
		\node[vertex](12) at (6.5,1){$\mathfrak{g}_2$};
		\path[draw,red, thick,-] (01) -- (11); 
		\path[draw, red,thick,-] (01) -- (12);
		\path[draw, red, thick,-] (02) -- (11);
		\end{tikzpicture}
	\end{minipage}\hspace*{0.2cm} 
\end{center}
Then by the construction given in the Theorem \ref{construction} the corresponding indecomposable semisimple Leibniz algebras are the following:
\begin{center}
$L_1=(\mathfrak{g}_1\oplus \mathfrak{g}_2)\ltimes \left( (J_{11}\otimes J_{12})\oplus (J_{21}\otimes J_{22}) \right),$

$L_2=(\mathfrak{g}_1\oplus \mathfrak{g}_2)\ltimes \left( (J_{11}\otimes J_{12})\oplus J_{21} \right),$
\end{center}
where $J_{pq}$ is a simple $\mathfrak{g}_q$-module (and for $L_2$, $\mathfrak{g}$-module $J_{21}\cong   J_{21}\otimes \mathbb{C}$). Note that for both $L_1$ and $L_2$ the graph $\Gamma$ is the same simple connected graph on two vertices. 
\end{ex}

\end{document}